\newtheorem{thm}{Theorem}
\newcommand{\nc}{\newcommand}
\nc{\Ra}{\Rightarrow}
\nc{\ra}{\rightarrow}
\nc{\ora}{\overrightarrow}
\nc{\ol}{\overline}
\nc{\de}{\delta}
\nc{\De}{\Delta}
\nc{\p}{\partial}
\nc{\ex}{\exists}
\nc{\nex}{\nexists}
\nc{\fa}{\forall}
\nc{\Ea}{\Leftrightarrow}
\nc{\ea}{\leftrightarrow}
\nc{\ub}{\underbrace}
\nc{\Rn}{\mathbb{R}^n}
\nc{\Rne}{\mathbb{R}^{n+1}}
\nc{\Rnm}{\mathbb{R}^{n-1}}
\nc{\bR}{\mathbb{R}}
\nc{\bC}{\mathbb{C}}
\nc{\bN}{\mathbb{N}}
\nc{\bZ}{\mathbb{Z}}
\nc{\bfx}{\textbf{\textit{x}}}
\nc{\bfX}{\textit{\textbf{X}}}
\nc{\bfY}{\textbf{\textit{Y}}}
\nc{\bfZ}{\textbf{\textit{Z}}}
\nc{\bfy}{\textbf{\textit{y}}}
\nc{\bfa}{\textbf{\textit{a}}}
\nc{\bfv}{\textbf{\textit{v}}}
\nc{\D}{\Delta}
\nc{\Dn}{\Delta^n}
\nc{\lx}{\lambda(x)}
\nc{\ba}{\textbf{\textit{a}}}
\nc{\Sn}{\S^n}
\nc{\Snm}{\S^{n-1}}
\nc{\Sne}{\S^{n+1}}
\nc{\na}{\nabla}
\nc{\deri}[2]{\frac{d #1}{d #2}}
\nc{\ph}[1]{\phi\left( #1\right)}
\nc{\ft}{_{n=1}^\infty}
\nc{\gs}{\sum_{n=1}^\infty}
\nc{\qf}[2]{\langle #1 #2,#2\rangle}
\nc{\mc}[1]{\mathcal{#1}}
\nc{\conj}[1]{\overline{#1}}
\title{Alternative proofs of Mandrekar's theorem}
\author{Linus Bergqvist}
\address{Department of Mathematics, Stockholm University, 106 91 Stockholm, Sweden.}
\email{linus@math.su.se}
\date{}
\keywords{Shift invariant subspaces, Hardy space, Reproducing kernels}
\subjclass[2010]{32A10, 32A35, 46E22}
\begin{document}

\begin{abstract}
We present two alternative proofs of Mandrekar's theorem, which states that an invariant subspaces of the Hardy space on the bidisc is of Beurling type precisely when the shifts satisfy a doubly commuting condition. The first proof uses properties of Toeplitz operators to derive a formula for the reproducing kernel of certain shift invariant subspaces, which can then be used to characterize them. The second proof relies on the reproducing property in order to show that the reproducing kernel at the origin must generate the entire shift invariant subspace.
\end{abstract}

\maketitle

\section{Background}

In this note we give two alternative proofs of Mandrekar's theorem on shift invariant subspaces of the Hardy space on the bidisc. For the first proof we will use the main idea from the alternative proof of Beurling's theorem given by Karaev in \cite{Karaev}, and use it to characterize certain closed, shift invariant subspaces through their reproducing kernels. The second proof is a modified version of the classical proof of Beurling's theorem given in \cite{Hoffman}. We can already here point out that the proofs given in this article work just as well for $H^2(\mathbb{D}^n)$, though certain calculations become a bit more involved.

As usual, we say that a closed subspace $\mathcal{M} \subset H^2(\mathbb{D}^2)$ is \emph{shift invariant} if $S_j \mathcal{M} \subset \mathcal{M}$ for $j=1,2$, where
$$
S_j: H^2(\mathbb{D}^2) \mapsto H^2(\mathbb{D}^2), \quad f(z) \mapsto z_j f(z), \quad \text{for } j=1,2.
$$

We want to prove the following.

\begin{thm}[Theorem 2 from \cite{Mandrekar}] \label{mandrekar}

An invariant subspace $\mathcal{M} \neq \{0\}$ of $H^2(\mathbb{D}^2)$ is of the form $\varphi H^2$ with $\varphi$ an inner function if and only if the shift operators $S_1$ and $S_2$ are \emph{doubly commuting} on $\mathcal{M}$.
\end{thm}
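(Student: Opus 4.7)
\emph{The easy direction.} If $\mathcal{M} = \varphi H^2(\mathbb{D}^2)$ with $\varphi$ inner, then $|\varphi|=1$ almost everywhere on $\mathbb{T}^2$ gives the projection formula $P_\mathcal{M} = M_\varphi P_{H^2(\mathbb{D}^2)} M_{\bar\varphi}$. Combined with the fact that $S_j^* S_k = S_k S_j^*$ on all of $H^2(\mathbb{D}^2)$ for $j \neq k$, a short calculation on vectors $f = \varphi g \in \mathcal{M}$ yields $T_1 T_2^* = T_2^* T_1$, where $T_j := S_j|_\mathcal{M}$ and $T_j^* = P_\mathcal{M} S_j^*|_\mathcal{M}$.

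\emph{The nontrivial direction.} Write $T_j := S_j|_\mathcal{M}$. The doubly commuting hypothesis $T_1^* T_2 = T_2 T_1^*$ forces each defect projection $D_j := I_\mathcal{M} - T_j T_j^*$ (orthogonal projection onto the wandering subspace $\mathcal{M} \ominus S_j\mathcal{M}$) to commute with both $T_k$ and $T_k^*$ whenever $k \neq j$. This algebraic input, combined with purity of each $T_j$ on $\mathcal{M}$ (inherited from $\bigcap_n S_j^n H^2(\mathbb{D}^2) = \{0\}$), yields the Wold-type decomposition
\[
\mathcal{M} \;=\; \bigoplus_{i,j\geq 0} z_1^i z_2^j\, \mathcal{N}, \qquad \mathcal{N} \,:=\, \mathcal{M} \ominus \bigl(S_1\mathcal{M}+S_2\mathcal{M}\bigr),
\]
with pairwise orthogonal summands.

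I would then fix any unit vector $\varphi \in \mathcal{N}$ and show $\varphi$ is inner by a Fourier calculation on $\mathbb{T}^2$: the orthogonality of the Wold summands translates directly into $\widehat{|\varphi|^2}(m,n) = 0$ for every $(m,n)\neq(0,0)$, so $|\varphi|^2$ is constant almost everywhere. An analogous Fourier argument applied to $\varphi\bar\psi$ for any $\psi \in \mathcal{N}\ominus \mathbb{C}\varphi$ gives $\varphi\bar\psi\equiv 0$, whence $\psi = 0$ (since $|\varphi|=1$), so $\dim \mathcal{N}=1$. The Wold decomposition then collapses to $\mathcal{M}=\overline{\mathrm{span}}\{z_1^i z_2^j \varphi : i,j \geq 0\} = \varphi H^2(\mathbb{D}^2)$.

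\emph{Main obstacle.} The technical heart is establishing the Wold decomposition and, tied to it, computing the mixed-quadrant Fourier coefficients $\widehat{|\varphi|^2}(m,-n)$ with $m,n \geq 1$. These vanish precisely because the doubly commuting hypothesis guarantees $z_1^m \mathcal{N} \perp z_2^n \mathcal{N}$ inside $\mathcal{M}$ for $m,n \geq 1$, an orthogonality that shift invariance alone does not supply; this is also where the hypothesis distinguishes Mandrekar's theorem from a purely Beurling-type one-variable conclusion.
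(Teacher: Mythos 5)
Your plan is essentially correct, but it is not an alternative to the paper's argument --- it is a reconstruction of Mandrekar's original proof, which is precisely the route this paper is written to avoid. Your wandering subspace $\mathcal{N} = \mathcal{M} \ominus (S_1\mathcal{M} + S_2\mathcal{M})$ coincides with the paper's $O_1(\mathcal{M}) \cap O_2(\mathcal{M})$, and your Fourier computation of $\widehat{|\varphi|^2}$ and of $\varphi\overline{\psi}$ (using $T_2\mathcal{N}^{\perp_1}$-type orthogonality supplied by double commutativity) is the same calculation the paper reproduces from Mandrekar to show the wandering subspace is one-dimensional and spanned by an inner function. Where you diverge is exactly the step you flag as the technical heart: you invoke the pure--pure case of S\l oci\'nski's Wold-type decomposition $\mathcal{M} = \bigoplus_{i,j} z_1^i z_2^j \mathcal{N}$, which requires a genuine argument (a double application of the one-variable Wold decomposition, after checking that $S_2$ restricts to a pure isometry on $\mathcal{M}\ominus S_1\mathcal{M}$), and which for $n>2$ variables needs Sarkar's later generalization. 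The paper's first proof replaces this entire step by the reproducing kernel identity $(1-\overline{\lambda_1}z_1)(1-\overline{\lambda_2}z_2)k_\lambda^{\mathcal{M}}(z) = \overline{\varphi(\lambda)}\varphi(z)$, which pins down $k_\lambda^{\mathcal{M}}$ as the kernel of $\varphi H^2$ in one line and works verbatim in $\mathbb{D}^n$; its second proof avoids the wandering subspace altogether, taking $\varphi$ to be (a normalized projection of) the reproducing kernel at the origin and showing directly that $\varphi^\perp$-and-orthogonal-to-$\varphi\mathbb{C}[z_1,z_2]$ forces $f=0$. So your route buys the structural Wold picture of $\mathcal{M}$ at the cost of proving (or citing) S\l oci\'nski's theorem, while the paper's routes buy shorter, more self-contained arguments that scale to $n$ variables without further input. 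If you intend your write-up to stand on its own, you should either supply the proof of the Wold decomposition for a doubly commuting pure pair or cite S\l oci\'nski explicitly; as stated, that step is asserted rather than established.
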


That $\{S_j\}_{j=1,2}$ is doubly commuting means that the operators commute with each other, and with each other's adjoints; that is $S_i S_j = S_j S_i$ and $S_i S_j^* = S_j^* S_i$ for $i \neq j$. As always, a function $\varphi$ is inner if $\varphi \in H^\infty(\mathbb{D}^2)$ and $|\varphi(z)| = 1$ almost everywhere on $\mathbb{T}^2$.

Since we will characterize closed invariant subspaces through their reproducing kernels, we will begin by going through some general theory for the reproducing kernels of operator range spaces and how Toeplitz operators act on such spaces. For more details see \cite{Model} and chapter $1$ of \cite{sub-Hardy}.

A Hilbert space $H$ with inner product $\langle \cdot, \cdot \rangle_H$ consisting of functions defined on a domain $D$ is called a reproducing kernel Hilbert space if point evaluations are bounded linear functionals. By Riesz representation theorem, this means that for every point $z_0 \in D$ there is a function $k_{z_0} \in H$, called the \emph{reproducing kernel at $z_0$}, such that 
$$
f(z_0) = \langle f, k_{z_0} \rangle_H
$$
for all $f \in H$. The reproducing kernel of $H^2(\mathbb{D}^n)$ is the Cauchy kernel
$$
C_\lambda(z) = \prod_{j=1}^n \frac{1}{(1-\overline{\lambda_j}z_j)},
$$
and throughout this note, we will denote by $k_\lambda^{\mathcal{M}}$ the reproducing kernel of a closed subspace $\mathcal{M} \subset H^2(\mathbb{D}^n)$.

If $T: H \mapsto H$ is a partial isometry on a reproducing kernel Hilbert space $H$ with reproducing kernel $k_\lambda(z)$, then $T(H)$ and $T(H)^\perp$ are closed subspaces of $H$, and their reproducing kernels are given by $T T^* k_\lambda(z)$ and $(1- T T^*)k_\lambda(z)$ respectively. In particular, we are interested in Toeplitz operators
$$
T_\varphi: H \mapsto H, \quad f \mapsto \varphi f,
$$ 
with symbol $\varphi$ belonging the multiplier algebra of $H$. In the case of $H^2(\mathbb{D}^n)$, the multiplier algebra is $H^\infty(\mathbb{D}^n)$, the space of bounded analytic functions on $\mathbb{D}^n$. 

If $T_\varphi$ is an isometry, then the above means that $\varphi H$ and $(\varphi H)^\perp$ will have reproducing kernels
\begin{equation}\label{kernels}
\varphi(z) \overline{\varphi(\lambda)} k_\lambda(z) \quad \text{ and } \quad  (1 - \varphi(z) \overline{\varphi(\lambda)}) k_\lambda(z)
\end{equation}
respectively. In order to arrive at \eqref{kernels} we have used that
\begin{equation} \label{Toeplitz_and_kernels}
T_{\varphi}^* = T_{\overline{\varphi}} \quad \text{ and } \quad T_{\overline{\varphi}} k_\lambda(z) = \overline{\varphi(\lambda)} k_\lambda(z).
\end{equation}
These two equalities always hold for Toeplitz operators on reproducing kernel Hilbert spaces, and will be used frequently in this note.

Note that $T_\varphi$ is an isometry whenever $\varphi$ is an inner function and $H$ is the Hardy space or a closed subspace of the Hardy space, and thus \eqref{kernels} holds in these cases.

\section{First proof}

The new idea in the proof given here is that we show that an invariant subspace on which the shift operators are doubly commuting must be of the form $\varphi H^2$ for $\varphi$ inner by characterizing the reproducing kernel of such a subspace, instead of using the wandering subspace theorem from \cite{Wold} as is done in the original proof. For some other parts of the theorem, we will still need Mandrekar's original proof. For completeness and to get a self-contained proof, we will present the proofs from Mandrekar's article of certain statements that we need to use here as well, and furthermore we will add proofs of statements from \cite{Wold} which are used in the proof.

\begin{proof}[First proof of Theorem~\ref{mandrekar}]

First of all, if $\mathcal{M} = \varphi H^2$ for an inner function $\varphi$, then $S_1$ and $S_2$ are commuting on $\mathcal{M}$ since $z_1 z_2 \varphi(z) f(z) = z_2 z_1 \varphi(z) f(z)$ for all $f \in H^2$. Then Mandrekar refers to Theorem $1$ $(i) \Ra (ii)$ in \cite{Wold} to show that $S_1$ and $S_2$ are in fact doubly commuting on such a subspace $\mathcal{M}$ if $\varphi$ is inner (or more generally, of constant modulus on $\mathbb{T}^2$).

Another way of showing that $S_1$ and $S_2$ are doubly commuting on such a subspace $\mathcal{M}$ is by considering their action on the reproducing kernel. From the first formula of \eqref{kernels} we know that the reproducing kernel of $\mathcal{M}$ is
$$
k^{\mathcal{M}}_\lambda(z) = \frac{\overline{\varphi(\lambda)} \varphi(z)}{(1-\overline{\lambda_1}z_1)(1-\overline{\lambda_2}z_2)}.
$$
Since the reproducing kernels are dense in $\mathcal{M}$, we only need to show that
$$
S_1 S_2^* k^{\mathcal{M}}_\lambda(z) = S_2^* S_1 k^{\mathcal{M}}_\lambda(z)
$$
for the kernels. By again using equation \eqref{Toeplitz_and_kernels}, we immediately see that the left hand side is $z_1 \overline{\lambda_2} k^{\mathcal{M}}_\lambda(z)$. To see that the same holds for the right hand side we will use that $k^{\mathcal{M}}_{\lambda}(z) = \varphi(z) \overline{\varphi(\lambda)} C_{\lambda_1}(z_1) C_{\lambda_2}(z_2)$, to get
\begin{multline*}
S_2^*(z_1 k^{\mathcal{M}}_{\lambda}(z)) = \left\langle w_1 \overline{w_2} k_{\lambda}^{\mathcal{M}}(w), k_z^{\mathcal{M}}(w) \right\rangle_{H^2(\mathbb{D}^2)} \\
= \left\langle w_1 \overline{w_2} \varphi(z) \overline{\varphi(\lambda)} C_{\lambda_1}(z_1) C_{\lambda_2}(z_2), \varphi(z) \overline{\varphi(\lambda)} C_{\lambda_1}(z_1) C_{\lambda_2}(z_2) \right\rangle_{H^2(\mathbb{D}^2)} \\
= \left\langle \varphi(z) \overline{\varphi(\lambda)} w_1 C_{\lambda_1}(w_1) \langle \overline{w_2} C_{\lambda_2}(w_2), C_{z_2}(w_2)\rangle_{H^2(\mathbb{D})}, C_{z_1}(w_1) \right\rangle_{H^2(\mathbb{D})} \\
= \left\langle \varphi(z) \overline{\varphi(\lambda)} w_1 C_{\lambda_1}(w_1) \overline{\lambda_2} C_{\lambda_2}(z_2), C_{z_1}(w_1) \right\rangle_{H^2(\mathbb{D})} \\
= \varphi(z) \overline{\varphi(\lambda)} \overline{\lambda_2} C_{\lambda_2}(z_2) \left\langle w_1 C_{\lambda_1}(w_1), C_{z_1}(w_1) \right\rangle_{H^2(\mathbb{D})} \\
= \varphi(z) \overline{\varphi(\lambda)} \overline{\lambda_2} C_{\lambda_2}(z_2) z_1 C_{\lambda_1}(z_1) = z_1 \overline{\lambda_2} k^{\mathcal{M}}_\lambda(z).
\end{multline*}
Note that in the second equality we have used that $\varphi$ is inner in order to get rid of a factor $|\varphi(w)|^2$, and in the third equality we have used \eqref{Toeplitz_and_kernels} to conclude that $\langle \overline{w_2} C_{\lambda_2}(w_2), C_{z_2}(w_2)\rangle_{H^2(w_2 \in \mathbb{T})} = \overline{\lambda_2} C_{\lambda_2}(z_2)$.

This proves one direction of the theorem. It remains to show the other direction.

One proof that we will essentially get from \cite{Mandrekar} is the claim that if $\mathcal{M}$ is an invariant subspace on which $\{S_j\}_{j=1,2}$ is doubly commuting, then the closed subspace $O_1(\mathcal{M}) \cap O_2(\mathcal{M})$ either has dimension $1$ and contains an inner function, or it only contains the zero function. Here
$$
O_j(\mathcal{M}) = \mathcal{M} \ominus S_j \mathcal{M}.
$$

In order to show that the dimension is smaller than or equal to $1$, we must use the fact that
\begin{equation} \label{commutativeContainment}
S_2(O_1) \subset O_1.
\end{equation}
Mandrekar proves this by simply referring to Theorem $1$ $(iii)$ in \cite{Wold}. However, this can be shown with elementary arguments using that $\{S_j\}_{j=1,2}$ are doubly commuting on $\mathcal{M}$. Namely, since $S_1$ and $S_2^*$ are commuting, we have that for every $g \in \mathcal{M} \ominus S_1 \mathcal{M}$
$$
0 = \langle g, S_1 (S_2^* f) \rangle = \langle g, S_2^* (S_1  f) \rangle = \langle S_2 g, S_1 f \rangle
$$
for all $f \in \mathcal{M}$, and so $S_2 g \in \mathcal{M} \ominus S_1 \mathcal{M}$. Again, here we regard $S_j$ and $S_j^*$ as operators on $\mathcal{M}$.

With this result at hand, we can go on with presenting the way Mandrekar proves that the dimension of $O_1(\mathcal{M}) \cap O_2(\mathcal{M})$ is $1$ unless the intersection only contains the zero function.

Let $g_1, g_2 \in O_1(\mathcal{M}) \cap O_2(\mathcal{M})$. Then for all $m,n > 0$
\begin{equation*}
\int_{\mathbb{T}^2} z_1^m z_2^n g_1(z) \overline{g_2(z)} dz = 0,
\end{equation*}
and by \eqref{commutativeContainment} we also have that
\begin{equation*}
\int_{\mathbb{T}^2} z_2^n g_1(z) \overline{z_1^m g_2(z)} dz = 0
\end{equation*}
for all $m,n > 0$.
By symmetry and since $\overline{z_1}^m = z_1^{-m}$, this means that
\begin{equation} \label{g_1g_2 is constant}
\int_{\mathbb{T}^2} z_1^m z_2^n g_1(z) \overline{g_2(z)} dz = 0,
\end{equation}
for all $(m,n) \neq (0,0)$, which means that $g_1(z) \overline{g_2(z)} = c$ a.e. on $\mathbb{T}^2$. 

Now suppose that there are $g_1, g_2 \in O_1(\mathcal{M}) \cap O_2(\mathcal{M}) \setminus \{0 \}$ with $g_1 \perp g_2$. 
In this case $|g_1|^2 = c_1 \neq 0$ and $|g_2|^2 = c_2 \neq 0$ a.e. on $\mathbb{T}^2$ by \eqref{g_1g_2 is constant}, but $g_1 \overline{g_2} = 0$ a.e. on $\mathbb{T}^2$ since $g_1 \overline{g_2}$ is a.e. constant and $g_1 \perp g_2$. 

This is impossible, and hence $O_1(\mathcal{M}) \cap O_2(\mathcal{M})$ is one-dimensional. Furthermore, from the above arguments it follows that $O_1(\mathcal{M}) \cap O_2(\mathcal{M})$ contains, and hence be generated by an inner function, which we denote by $\varphi$. All that remains is to show that this implies that $\mathcal{M} = \varphi H^2(\mathbb{D}^2)$ for this inner function $\varphi$.

The next step in Mandrekar's proof relies on a wandering subspace theorem for commuting isometries due to Słociński from \cite{Wold}. Here, we instead use an argument with reproducing kernels similar to that in \cite{Karaev}.

Since $S_1$ and $S_2$ are partial isometries on any closed invariant subspace of $H^2(\mathbb{D}^2)$, applying the second formula of \eqref{kernels} shows that the reproducing kernel of $O_2(\mathcal{M})$ is given by
$$
(1-\overline{\lambda_2}z_2)k_\lambda^{\mathcal{M}}(z),
$$
and since $O_2(\mathcal{M})$ is invariant under $S_1$ by \eqref{commutativeContainment}, applying the second formula of \eqref{kernels} again shows that the reproducing kernel for $O_1(\mathcal{M}) \cap O_2(\mathcal{M})$ is given by
\begin{equation} \label{RK_O_j}
(1-\overline{\lambda_1}z_1)(1-\overline{\lambda_2}z_2) k_\lambda^\mathcal{M}(z).
\end{equation}
If $O_1(\mathcal{M}) \cap O_2(\mathcal{M}) = \{0\}$, then 
$$
(1-\overline{\lambda_1}z_1)(1-\overline{\lambda_2}z_2) k_\lambda^\mathcal{M}(z) = 0 \Ra k_\lambda^\mathcal{M}(z) = 0,
$$
and so $\mathcal{M}$ is trivial.

If $O_1(\mathcal{M}) \cap O_2(\mathcal{M}) \neq \{0\}$, then we know that $O_1(\mathcal{M}) \cap O_2(\mathcal{M})$ is one-dimensional, and so its reproducing kernel will be given by $\overline{\varphi(\lambda)} \varphi(z)$ for some $\varphi$ with $H^2$-norm equal to $1$. From the previous arguments, we know that this $\varphi$ will in fact be inner. Put together, this means that

\begin{multline*}
(1-\overline{\lambda_1}z_1)(1-\overline{\lambda_2}z_2) k_\lambda^\mathcal{M} (z) = \overline{\varphi(\lambda)} \varphi(z) \\
\iff k_\lambda^\mathcal{M}(z) = \frac{\overline{\varphi(\lambda)} \varphi(z)}{(1-\overline{\lambda_1}z_1)(1-\overline{\lambda_2}z_2)}.
\end{multline*}

By \eqref{kernels} we recognize the right hand side of the last equation as the reproducing kernel of $\varphi H^2(\mathbb{D}^2)$. Since a Hilbert space is uniquely determined by its reproducing kernel, this finishes the proof.

\end{proof}

A somewhat informal, but perhaps helpful way to think about the derivation of formula \eqref{RK_O_j} for the reproducing kernel of $O_1(\mathcal{M}) \cap O_2(\mathcal{M})$ is to note that the orthogonal projection onto $O_j(\mathcal{M}) \subset \mathcal{M}$ is given by $P_j := (I - S_j S_j^*)$, and using that since $\{S_j \}_{j=1,2}$ are doubly commuting on $\mathcal{M}$, the projections $P_1$ and $P_2$ commute. As a consequence, the orthogonal projection onto $O_1(\mathcal{M}) \cap O_2(\mathcal{M})$ is given by $P_1 P_2$, and the reproducing kernel of $O_1(\mathcal{M}) \cap O_2(\mathcal{M})$ is
$$
P_1 P_2 k_\lambda^\mathcal{M}(z).
$$

Furthermore, it is worth noting that our argument for why
$$
O_1(\mathcal{M}) \cap O_2(\mathcal{M}) = \{0\} \Ra \mathcal{M} = \{ 0 \},
$$
is actually not very different from the argument given in \cite{Mandrekar}. For this Mandrekar essentially refers to the wandering subspace theorem from \cite{Wold}, and concludes that if $O_1(\mathcal{M}) \cap O_2(\mathcal{M}) = \{0\}$ then $\mathcal{M} = \{0 \}$, since clearly the wandering subspace $O_1(\mathcal{M}) \cap O_2(\mathcal{M})$ can't generate anything else. 

Note that the wandering subspace argument using reproducing kernels given above works exactly the same way in $n$ variables, and Mandrekar's argument for showing that the wandering subspace is one-dimensional also works in $n$ variables, although in that case the argument will become slightly more technical. But since \cite{Wold} only deals with a pair of doubly commuting isometries, Mandrekar's original theorem only concerns functions of two complex variables. However, more recently the main results in \cite{Wold} have been generalized by Sarkar in \cite{Wold_Sarkar} to deal with an $n$-tuple of pairwise doubly commuting isometries, and thus Mandrekar's original argument can be applied directly to show the corresponding theorem for $H^2(\mathbb{D}^n)$. Though, it is worth pointing out that Mandrekar's theorem on $H^2(\mathbb{D}^n)$ has already been proved by Seto in \cite{Seto} by using the Wold decompositions from \cite{Wold} in a slightly different way.

\section{Second proof}

In this section we give another proof of the second direction of Mandrekar's theorem, which is instead based on the proof idea of Beurling's theorem provided in \cite{Hoffman}.

We begin by proving a weaker version of the second direction of Mandrekar's theorem, where we assume that the origin is not a common zero for all functions in $\mathcal{M}$. The exact statement we prove is as follows.

\begin{thm} \label{weak_mandrekar}

If the shift operators $S_1$ and $S_2$ are \emph{doubly commuting} on an invariant subspace $\mathcal{M} \neq \{0\}$ of $H^2(\mathbb{T}^2)$, which has the additional property that it contains an element which does not vanish at the origin, then $\mathcal{M}$ is of the form $\varphi H^2$ for an inner function $\varphi$.
\end{thm}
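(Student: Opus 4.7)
The plan is to show that the reproducing kernel at the origin, suitably normalized, is an inner function $\varphi$ for which $\mathcal{M} = \varphi H^2(\mathbb{D}^2)$. The reproducing property immediately gives, for every $g \in \mathcal{M}$ and $j = 1, 2$,
\[
\langle z_j g, k_0^{\mathcal{M}} \rangle_{\mathcal{M}} = (z_j g)(0) = 0,
\]
so $k_0^{\mathcal{M}} \perp S_j \mathcal{M}$ and hence $k_0^{\mathcal{M}} \in O_1(\mathcal{M}) \cap O_2(\mathcal{M})$. Since $\mathcal{M}$ contains an element that does not vanish at the origin, $k_0^{\mathcal{M}} \neq 0$. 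Applying the constant-modulus computation from the first proof (where the doubly commuting hypothesis enters via \eqref{commutativeContainment}) to $g_1 = g_2 = k_0^{\mathcal{M}}$ then shows that $\varphi := k_0^{\mathcal{M}}/\|k_0^{\mathcal{M}}\|_{H^2}$ is a unit vector in $\mathcal{M}$ with $|\varphi| = 1$ a.e.\ on $\mathbb{T}^2$, i.e.\ an inner function lying in $O_1(\mathcal{M}) \cap O_2(\mathcal{M})$.

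The inclusion $\varphi H^2(\mathbb{D}^2) \subset \mathcal{M}$ is standard: $\varphi \in \mathcal{M}$, $\mathcal{M}$ is closed and shift invariant, polynomials are dense in $H^2(\mathbb{D}^2)$, and $p \mapsto p\varphi$ is an $H^2$-isometry because $|\varphi| = 1$ a.e. For the reverse inclusion the idea is to show that any $f \in \mathcal{M}$ orthogonal to $\varphi H^2(\mathbb{D}^2)$ must be zero. Setting $g := f \overline{\varphi} \in L^2(\mathbb{T}^2)$, the task is to verify $\widehat{g}(m, n) = 0$ for every $(m, n) \in \mathbb{Z}^2$. For $(m, n) \in \mathbb{Z}_{\geq 0}^2$ this is exactly the assumption $\langle f, z_1^m z_2^n \varphi \rangle = 0$. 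For the other three quadrants one rewrites the Fourier integral as a Hilbert-space pairing in $\mathcal{M}$; for instance, when $m \leq -1$ and $n \geq 0$,
\[
\widehat{g}(m, n) = \int_{\mathbb{T}^2} z_1^{-m} f \cdot \overline{z_2^n \varphi}\, dm = \langle z_1^{-m} f, z_2^n \varphi \rangle_{H^2},
\]
which vanishes because $z_1^{-m} f \in S_1 \mathcal{M}$ while $z_2^n \varphi \in O_1(\mathcal{M})$ (since $\varphi \in O_1$ and iterating \eqref{commutativeContainment} keeps $z_2^n \varphi$ inside $O_1$). The sectors $m \geq 0,\, n \leq -1$ and $m \leq -1,\, n \leq -1$ are handled symmetrically, using the analogue $S_1(O_2) \subset O_2$ of \eqref{commutativeContainment} in the first case. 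Once $\widehat{g} \equiv 0$, we have $f \overline{\varphi} = 0$ a.e.\ on $\mathbb{T}^2$, and since $|\varphi| = 1$ a.e.\ this forces $f = 0$, giving $\mathcal{M} \subset \varphi H^2(\mathbb{D}^2)$ and therefore equality.

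The hard part will be organizing the quadrant-by-quadrant reduction of the negative Fourier coefficients of $f \overline{\varphi}$ to vanishing inner products of the form $\langle \text{element of } S_j \mathcal{M},\ \text{element of } O_j \rangle$; the decisive observation is that the orbits $\{z_2^n \varphi\}_{n \geq 0}$ and $\{z_1^m \varphi\}_{m \geq 0}$ remain inside $O_1$ and $O_2$ respectively, which is precisely where the doubly commuting hypothesis genuinely enters this second argument. Once that bookkeeping is in place, the equality $\mathcal{M} = \varphi H^2(\mathbb{D}^2)$ drops out from $|\varphi| = 1$ a.e.\ on $\mathbb{T}^2$.
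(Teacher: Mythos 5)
Your proposal is correct and follows essentially the same route as the paper's proof of Theorem~\ref{weak_mandrekar}: take $\varphi$ to be the normalized reproducing kernel at the origin, show $|\varphi|=1$ a.e.\ by checking the Fourier coefficients of $|\varphi|^2$ quadrant by quadrant, and then show any $f\in\mathcal{M}$ orthogonal to $\varphi H^2$ vanishes by the same quadrant analysis applied to $f\overline{\varphi}$. The only cosmetic difference is that you phrase the mixed-sign quadrants via $\varphi\in O_1(\mathcal{M})\cap O_2(\mathcal{M})$ and the containments $S_2(O_1)\subset O_1$, $S_1(O_2)\subset O_2$, whereas the paper obtains the same vanishing inner products directly from the identity $\langle S_1^k\varphi, S_2^n f\rangle=\langle\varphi, S_2^n(S_1^*)^k f\rangle$ and the reproducing property.
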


\begin{proof}

As always we denote by $k_\lambda^{\mathcal{M}}(z)$ the reproducing kernel of $\mathcal{M}$. Now, denote by $\varphi(z)$ the reproducing kernel at the origin, $k_0^{\mathcal{M}}(z)$. Note that $k_0(z) \neq 0$ since we assume that some function in $\mathcal{M}$ does not vanish at the origin. We will show that $\varphi(z)$ is inner by showing that all Fourier coefficients of $|\varphi(z)|^2$ apart from the constant term are zero.

By the reproducing property of $\varphi$
\begin{equation} \label{proof2:orthogonal_to_monomials}
\int_{\mathbb{T}^2} |\varphi(z)|^2 z_1^{k} z_2^{n} |dz| = \langle z_1^k z_2^n \varphi(z),  \varphi(z) \rangle  = 0
\end{equation}
for all $(k,n) \neq (0,0)$ with $k,n \geq 0$, and by taking complex conjugates, we also see that
$$
\int_{\mathbb{T}^2} |\varphi(z)|^2 z_1^{k} z_2^{n} |dz|  = 0
$$
when $(k,n) \neq (0,0)$, and $k, n \leq 0$.

If $(k,n) \neq (0,0)$ with $k, n \geq 0$ then
$$
\int_{\mathbb{T}^2} |\varphi(z)|^2 z_1^{k} z_2^{-n} |dz| = \int_{\mathbb{T}^2} (\varphi(z) z_1^{k}) \overline{(\varphi(z) z_2^{n})} |dz| = \langle S_1^k \varphi(z), S_2^{n} \varphi(z) \rangle,
$$ 
and since $S_1$ and $S_2$ are assumed to be doubly commuting
$$
\langle S_1^k \varphi(z), S_2^{n} \varphi(z) \rangle = \langle \varphi(z), (S_1^*)^k S_2^{n} \varphi(z) \rangle = \langle \varphi(z), S_2^{n} (S_1^*)^k \varphi(z) \rangle = 0,
$$
where the last equality is again a consequence of the reproducing property of $\varphi$.

By using the same argument with $(-k,n)$ instead of $(k,-n)$, we finally see that 
$$
\int_{\mathbb{T}^2} |\varphi(z)|^2 z_1^{k} z_2^{n} |dz| = 0
$$
for all $(k,n) \neq (0,0)$, which shows that $|\varphi(z)|$ is constant almost everywhere on $\mathbb{T}^2$, and since $\varphi$ is normalized, this means that $\varphi$ is inner.

It remains to show that $\mathcal{M} = \varphi H^2(\mathbb{D}^2)$.

Since $\varphi$ has constant modulus on the boundary and since the polynomials are dense in $H^2(\mathbb{D}^2)$, we have that
$$
\varphi H^2(\mathbb{D}^2) = \overline{\{\varphi(z) p(z): p(z) \in \mathbb{C}[z_1,z_2] \}} \subset \mathcal{M}.
$$
Now let $f \in \mathcal{M}$ be orthogonal to all elements in $\{\varphi(z) p(z): p(z) \in \mathbb{C}[z_1,z_2] \}$. We will show that $f=0$ by showing that all the Fourier coefficients of $\varphi \overline{f}$ are zero.

By the orthogonality assumption on $f$
$$
\int_{\mathbb{T}^2} \varphi(z) \overline{f(z)} z_1^k z_2^n |dz| = 0,
$$ 
for all $n,k \geq 0$. Furthermore, by again using the reproducing property of $\varphi$, and since $S_1$ and $S_2$ are doubly commuting on $\mathcal{M}$, we have that
\begin{multline*}
\int_{\mathbb{T}^2} \varphi(z) \overline{f(z)} z_1^k z_2^{-n} |dz| = \int_{\mathbb{T}^2} \varphi(z)z_1^k \overline{f(z) z_2^n} |dz| \\
= \langle z_1^k \varphi(z), z_2^n f(z) \rangle = \langle \varphi(z), S_2^n (S_1^*)^k f(z) \rangle = 0,
\end{multline*}
for $n \geq 1$ and $k \geq 0$. The same argument for $(-k,n)$ instead of $(k,-n)$ shows that
$$
\int_{\mathbb{T}^2} \varphi(z) \overline{f(z)} z_1^{-k} z_2^{n} |dz| = 0
$$
when $n \geq 0$ and $k \geq 1$.

It remains to show that 
\begin{equation} \label{proof2:last_orthogonality}
\langle \varphi, f(z) z_1^k z_2^n \rangle = \int_{\mathbb{T}^2} \varphi(z) \overline{f(z)} z_1^{-k} z_2^{-n} |dz| = 0
\end{equation}
for $k,n \geq 1$.

But this is an immediate consequence of the reproducing property of $\varphi$ since $f(z) z_1^k z_2^n$ belong to $\mathcal{M}$ and vanish at the origin for all $k,n \geq 1$.

It follows that all Fourier coefficients of $\varphi \overline{f}$ vanish, and so $\varphi \overline{f} = 0$. Since $|\varphi| = 1$ a.e. on $\mathbb{T}^2$, this means that $f = 0$ a.e. on $\mathbb{T}^2$, and thus $f$ is identically equal to zero. 
\end{proof}

For functions of one variable, the general case --- in which all function in $\mathcal{M}$ might have a common zero at the origin --- is easily reduced to the case above by, if necessary, simply factoring out $z^k$ for some $k \geq 1$. For functions of several variables this is no longer possible since there is no canonical factor corresponding to zeros at the origin. Instead, we modify the argument given above as follows.

\begin{proof}[Second proof of the second implication of Theorem~\ref{mandrekar}]

We will show that if $S_1$ and $S_2$ are doubly commuting on $\mathcal{M}$, then $\mathcal{M} = \varphi H^2(\mathbb{D}^2)$ for some inner function $\varphi$. 

If there is some $f \in \mathcal{M}$ which does not vanish at the origin, we can just apply Theorem~\ref{weak_mandrekar}, and then there is nothing more to be done. 

Now suppose $d \geq 1$ is the smallest integer such that all partial derivatives of total degree less than $d$ of all functions $f \in \mathcal{M}$ vanish at the origin. That is, for all $j_1, j_2 \in \mathbb{N}$ with $j_1+j_2 < d$
\begin{equation} \label{deriv_low_order_vanish}
\left( \frac{\partial^{j_1 + j_2}f}{\partial z_1^{j_1} \partial z_2^{j_2}}  \right)(0,0) = 0
\end{equation}
for all $f \in \mathcal{M}$. We may assume that $d < \infty$ since otherwise $\mathcal{M} = \{0 \}$.  

Let $(d_1, d_2) \in \mathbb{N}^2$ with $d_1+d_2 = d$ be any pair of integers such that 
$$
\left( \frac{\partial^{d_1 + d_2}f}{\partial z_1^{d_1} \partial z_2^{d_2}}  \right)(0,0) \neq 0
$$
for some $f \in \mathcal{M}$.
Consider the bounded linear functional $E^{(d_1,d_2)}_0$ on $H^2(\mathbb{D}^2)$ defined by 
$$
E^{(d_1,d_2)}_0: f \mapsto \left( \frac{\partial^{d_1 + d_2}f}{\partial z_1^{d_1} \partial z_2^{d_2}}  \right)(0,0).
$$
That this functional is indeed bounded on $H^2(\mathbb{D}^2)$ is clear since it maps a function $f$ to a fixed constant multiple of its Fourier coefficient with index $(d_1, d_2)$.  

By the Riesz representation theorem there exists a unique function, which we will denote by $k^{(d_1,d_2)}_0$, such that
$$
E^{(d_1,d_2)}_0(f) = \left\langle f, k^{(d_1,d_2)}_0 \right\rangle
$$
for all $f \in H^2(\mathbb{D}^2)$. This is kind of a reproducing kernel at the origin, only it gives the value for the $(d_1,d_2)$:th partial derivative instead of for the function.

Just as for the ordinary reproducing kernel, we have that $P^\mathcal{M} k^{(d_1,d_2)}_0 \in \mathcal{M}$ is the unique function in $\mathcal{M}$ such that
$$
E^{(d_1,d_2)}_0(f) = \left\langle f, P^{\mathcal{M}} k^{(d_1,d_2)}_0 \right\rangle
$$
for all $f \in \mathcal{M}$. From now on we will denote the normalized function $P^{\mathcal{M}} k^{(d_1,d_2)}_0 / \| P^{\mathcal{M}} k^{(d_1,d_2)}_0 \|$ by $\varphi$. 

Since $\varphi$ reproduces a rescaling of the $(d_1,d_2)$:th partial derivative at the origin and since equation \eqref{deriv_low_order_vanish} holds for all $f \in \mathcal{M}$, we have that
$$
\int_{\mathbb{T}^2} |\varphi(z)|^2 z_1^{k} z_2^{n} |dz| = \langle z_1^k z_2^n \varphi(z),  \varphi(z) \rangle  = 0,
$$
for all $(n,k) \neq (0,0), n,k \in \mathbb{N}$. That is $\varphi$ satisfies equation \eqref{proof2:orthogonal_to_monomials} from the proof of Theorem~\ref{weak_mandrekar}.

In fact, as a consequence of the reproducing property of $\varphi$ and the fact that \eqref{deriv_low_order_vanish} holds for all $f \in \mathcal{M}$, we have that 
\begin{equation} \label{deriv_kernel_rep_prop}
\langle \varphi, f(z) z_1^n z_2^k \rangle = 0
\end{equation}
for all $f \in \mathcal{M}$ and all $n,k \in \mathbb{N}$ with $(n,k) \neq (0,0)$. To see that \eqref{deriv_kernel_rep_prop} holds, note that when we evaluate the terms of the partial derivatives of $f(z) z_1^n z_2^k$ 
 at the origin, either they will vanish because of a monomial factor $z_1^l z_2^m$ still being left after differentiation, or they will vanish because of \eqref{deriv_low_order_vanish}.

Now in order to finish the proof we can just use the proof of Theorem~\ref{weak_mandrekar} from equation \eqref{proof2:orthogonal_to_monomials} verbatim, if we just replace any reference to "the reproducing property of $\varphi$" with a reference to equation \eqref{deriv_kernel_rep_prop}. 

\end{proof}

The function $\varphi$ used above can be obtained as the unique minimizer of a suitable extremal problem. When obtained in this way, one instead shows that equation \eqref{proof2:orthogonal_to_monomials} holds through a variational argument. This is of course more complicated than the argument given above, but in the one variable setting this approach has been successfully applied to extend results whose usual proofs rely heavily on Hilbert space techniques to Banach spaces like $H^p(\mathbb{D})$ and $A^p(\mathbb{D})$. In this specific context though it might be worth to point out that Mandrekar's theorem for $H^p(\mathbb{D}^2)$ for $p \geq 1$ has already been proved by Redett in \cite{Redett}. The argument used in that article is a modification of the idea of considering the intersection of the invariant subspace with $H^2(\mathbb{D}^2)$, applying Mandrekar's theorem, and using a density argument.

\end{document}